\crefname{hypothesis}{Hypothesis}{Hypotheses}
\title{Homotopies and transcendental extensions in colouring problems}
\author{Wojciech Duli\'nski 
  (\email{w.dulinski@uw.edu.pl})
  \\University of Warsaw, Poland.}
\begin{document}

\maketitle



\textit{\textbf{Key words -}} colorings, Sperner, polytopes, triangulations
\section{Introduction}  
The study of vertex colorings in triangulated polytopes is a classical topic in combinatorial topology. A foundational result in this field is Sperner's lemma, which, while equivalent to Brouwer's fixed point theorem, can be expressed in purely combinatorial terms. In this paper, we advance existing theorems using the oriented volume method of McLennan and Tourky \cite{sperner1}—a framework that replaces combinatorial arguments with polynomial homotopy techniques. For deeper results, we further refine this method by integrating geometric realizations with algebraically independent coordinates, yielding the following results:

\begin{theorem}\label{gra-y}  
Let $\Delta^n$ be an $n$-dimensional simplex with a given triangulation. Let $V$ be the set of vertices of this triangulation. If $V$ is colored with $n$ colors, there exists a connected monochromatic subgraph that intersects every codimension 1 face of $\Delta^n$.  
\end{theorem}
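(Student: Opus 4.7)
The plan is to argue by contradiction, applying the oriented-volume homotopy technique from the previous section. Let $p_1,\dots,p_{n+1}$ denote the vertices of $\Delta^n$ and, for each $i$, let $F_i$ be the facet opposite $p_i$; I read ``every face'' as ``every facet $F_i$'', which is the direct analogue of ``every side of the board'' in the Y-game description. Assume no monochromatic connected subgraph meets every $F_i$. For each monochromatic connected component $K$ of the $1$-skeleton of $T$, pick an index $i(K)\in\{1,\dots,n+1\}$ with $K\cap F_{i(K)}=\emptyset$. Note that if $K$ contains a simplex-vertex $p_j$, then $p_j$ lies in every $F_\ell$ with $\ell\neq j$, so $F_j$ is the only facet $K$ can miss, forcing $i(K)=j$; the choice is therefore consistent at the vertices of $\Delta^n$.

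Next I would define a homotopy $H_t$ on the triangulated $\Delta^n$ by moving each $T$-vertex $v$ linearly at constant speed from $v$ at $t=0$ to $p_{i(K(v))}$ at $t=1$, where $K(v)$ is the monochromatic component of $v$, and extending barycentrically over each simplex of $T$. The one nontrivial point is to verify that the homotopy obeys the admissibility rules stated earlier, namely that $T$-vertices on $\partial\Delta^n$ stay inside their smallest containing face. Since $K(v)$ avoids $F_{i(K(v))}$ we have $v\notin F_{i(K(v))}$, which is equivalent to saying that $p_{i(K(v))}$ is a vertex of the smallest face of $\Delta^n$ containing $v$; hence the straight segment from $v$ to $p_{i(K(v))}$ lies entirely in that face. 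Interior $T$-vertices have no such constraint. Because $\Delta^n$ sits in an ambient space of its own dimension, the simpler form of the oriented volume argument applies without recourse to the Cayley--Menger determinant.

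Invoking the volume-preservation principle of the previous section, the map $t\mapsto\sum_{\sigma\in T^{(n)}}\mathrm{Vol}(H_t(\sigma))$ is constant, equal to $\mathrm{Vol}(\Delta^n)>0$. To derive a contradiction I would evaluate at $t=1$. Every maximal simplex $\sigma\in T$ has $n+1$ vertices coloured with only $n$ colours, so by the pigeonhole principle two of them, say $u,w$, share a colour. Since $u$ and $w$ belong to the same simplex $\sigma$, they are joined by an edge of $T$ and therefore lie in a single monochromatic component $K$; consequently both are sent to $p_{i(K)}$ under $H_1$. Thus $H_1(\sigma)$ has a repeated vertex and contributes volume $0$, giving $0=\mathrm{Vol}(\Delta^n)>0$, the desired contradiction.

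The main obstacle I foresee is precisely the admissibility check in the second step: the chosen indices $i(K)$ must be simultaneously compatible with the boundary-motion constraint at \emph{every} $T$-vertex in $\partial\Delta^n$, including those that happen to coincide with some $p_j$. The key observation that unlocks everything is that $K\cap F_{i(K)}=\emptyset$ forces $v\notin F_{i(K)}$ for each $v\in K$, which in turn places $p_{i(K)}$ inside the smallest face containing $v$; once this is verified, the oriented-volume machinery delivers the contradiction cleanly.
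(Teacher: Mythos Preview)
Your proof is correct and follows essentially the same route as the paper: assume for contradiction that every monochromatic component misses some facet, send each component linearly to the opposite vertex, invoke volume preservation, and collapse every top-dimensional simplex via the pigeonhole argument on colours. Your treatment is in fact slightly more detailed than the paper's, which does not spell out the boundary-admissibility check you highlight; the paper simply notes that the vertices of $\Delta^n$ remain stationary and leaves the rest implicit.
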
  

This result follows directly from the oriented volume method and serves as a motivation and concise illustration of its utility. While being simple, it admits a nice interpretation as a statement about the impossibility of a draw in the generalized Y game, extending the 2-dimensional case established by Prytuła \cite{prytula} and linking the theory presented here to classical result of Gale on the game of Hex \cite{gale}.

\begin{theorem}\label{prod-sympleks}
Let $T$ be a triangulation of $\Delta^n \times \Delta^m$ with a Sperner labeling by $(n+1)(m+1)$ colors, where each extreme point of $\Delta^n \times \Delta^m$ has a distinct label. For any tuple $(k_0, \ldots, k_m)$ with $k_0 + \ldots + k_m = n+m+1$, there exists an odd number of simplices $\sigma$ in $T$ of type $(k_0, \ldots, k_m)$ (i.e., simplices with exactly $k_i$ vertices in the fiber over the $i$-th vertex of $\Delta^m$ under the projection $\Delta^n \times \Delta^m \to \Delta^m$). Moreover, the count of such simplices $\sigma$ sharing the orientation of $\sigma'$ exceeds those with the opposite orientation by exactly one.  
\end{theorem}  

This theorem generalizes the multilabeled Sperner lemmas of Meunier and Su \cite{multi} in two key ways: it applies to arbitrary triangulations of simplex products (rather than restrictive subdivisions) and unifies their dual formulations through coordinate symmetry in $\Delta^n \times \Delta^m$, at the same time giving additional information about the parity of simplices.

\begin{theorem}[Multilabeled Ky Fan's Lemma]\label{mult-kyfan}  
Let $P$ be a barycentric derived subdivision of the octahedral subdivision of the $n$-disk $\mathbb{B}^n$, with triangulation $T$ that restricts to a centrally symmetric triangulation on $\partial P$. Let $\pi: P \times \Delta^{m-1} \to \Delta^{m-1}$ be the natural projection, and let $\overline{T}$ be a labeling of $P \times \Delta^{m-1}$ such that the restriction to each $\pi$-fiber over a vertex of $\Delta^{m-1}$ is a Fan labeling of $P$. Then there exists an odd number of alternating $n$-simplices of any type $(k_1, \ldots, k_m)$ in $T$; these simplices become alternating $(k_i-1)$-simplices when restricted to $\pi^{-1}(i)$.  
\end{theorem}  

This final result synthesizes the oriented volume method with algebraic independence in coordinates to address the interplay of multiple Fan labelings across a product space. Its proof, situated at the culmination of our technical development, demonstrates the necessity of this hybrid approach for high-dimensional generalizations.  

In addition to these theorems, we provide new proofs of classical results such as the Atanassov conjecture and discuss the boundaries of our framework through counterexamples to plausible extensions. Collectively, this work establishes a hierarchy of techniques: from the purely combinatorial (Theorem \ref{gra-y}) to the algebraically enriched (Theorems \ref{prod-sympleks} and \ref{mult-kyfan}), illustrating how geometric-algebraic refinements expand the scope of fixed-point principles.  

\section{Basic notions}
By an \emph{$n$-simplex} $\Delta^n$, we understand a convex hull of $n+1$ affinely independent points $p_1, \ldots, p_{n+1}$ in a Euclidean space. By its \emph{$d$-face}, we mean a convex hull of $d+1$ distinct points from $\{p_1, \ldots, p_{n+1}\}$. If we don't want to specify the exact number of these points, we simply speak of a face of a simplex.

If the dimension of the ambient Euclidean space is $n$, then after a choice of orientation, the (oriented) volume of a simplex is given by the sum of determinants formed by points of $n+1$ of its $(n-1)$-faces, divided by $n!$ (this may be considered a special case of the generalized shoelace formula, see Lemma \ref{shoelace}).

By a \emph{$d$-polytopal complex}, we understand a subset $P$ of $\mathbb{R}^n$ such that:
\begin{itemize}
    \item it is a finite sum of convex hulls $C_i = \text{conv}(\{v_1^i, \ldots, v_{m_i}^i\})$ of finite subsets $\{v_1^i, \ldots, v_{m_i}^i\}$, homeomorphic to the closed ball $\mathbb{B}^r$, where $r \leq d$; such convex hulls are called \emph{$r$-cells}, while points $v_j^i$ are called \emph{vertices} of $P$;
    \item it has at least one $d$-cell;
    \item the intersection of cells $C_i, C_j$ is a cell $\text{conv}(S)$ for some $S \subset \{v_1^i, \ldots, v_{m_i}^i\} \cap \{v_1^j, \ldots, v_{m_j}^j\}$;
\end{itemize}

A polytopal complex is a \emph{polytope} if it is a convex subset of $\mathbb{R}^n$. Polytopal complexes are obviously compact and have a finite number of vertices.

We say that a point of a $d$-polytopal complex $P$ is on its \emph{boundary} $\partial P$ if it is contained in $\partial C_i$ for exactly one $d$-cell $C_i$. A standard reference on the subject of polytopes and polyhedral complexes is Ziegler \cite{ziegler}.

If $P \subset \mathbb{R}^n$ is a $d$-polytopal complex and a $d$-cell of $P$ spans the affine $d$-space $A_v$, we call $A_v$ \emph{essential} with respect to $P$. If $\lambda$ is the $d$-dimensional Lebesgue measure, we define the \emph{volume} of $P$ to be $\sum \lambda(A_v \cap P)$, where the summation is over all affine $d$-spaces essential with respect to $P$. Note that this definition does not require orientability (and is, in a sense, local).

By a \emph{triangulation} of a polytopal complex $P$, we mean a collection $T$ of simplices $\{\sigma_i\}_{i \in I}$ such that $\bigcup_{i \in I} \sigma_i = P$, for each $\sigma \in T$, every face of $\sigma$ is in $T$, and for any pair of $\sigma_i, \sigma_j$ with $i \neq j$, the set $\sigma_i \cap \sigma_j$ is a common face of $\sigma_i$ and $\sigma_j$ or is empty. By vertices of a triangulation ($T$-vertices), we mean vertices of the $\sigma_i$'s. By $T_{ess}$ we denote the collection of essential simplices of $T$, that is, simplices of the maximal dimension.

By a \emph{triangulated polytopal complex} $P$, we mean a polytopal complex in which every cell is a simplex. Note that "a triangulated polytopal complex" and "a polytopal complex with a triangulation" are distinct notions (and the distinction will be crucial), because in the latter, the extreme points of a given simplex in the triangulation need not be the vertices of the polytopal complex. Sometimes, in this case, we use the term "triangulation on the vertices of $P$" to distinguish it from a general triangulation of $P$.

By a \emph{labeling} (coloring) of a polytopal complex $P$ with $n$ colors, we mean a function $f$ from the set of vertices of $P$ to the set $\{1, \ldots, n\}$. We say that a vertex $v$ has a label (color) $f(v)$.

\section{Description of the General Proof Technique}
For the reader's convenience, we decided to include in a separate paragraph the discussion of the proof technique that we employ extensively in this article. The general idea of the oriented volume method can be inferred from the following:

\begin{lemma}
    Let $P \subset \mathbb{R}^n$ be a triangulated $n$-polytopal complex with a triangulation $T$. Fix some coloring on the set of $T$-vertices.
    Let $H: P \times [0,1] \to \mathbb{R}^n$ be a homotopy satisfying the following conditions: the vertices on the boundary of $P$ remain stationary, each vertex $v_i$ in the interior of $P$ moves in time $t \in [0,1]$ along a straight line with constant velocity towards some fixed vertex $v_i'$ and on the remaining points of $P$ the action of $H$ is determined by extending $H$ to every simplex of $T$ using barycentric coordinates (so in particular, $H$ is piecewise-linear).
    For every $\sigma \in T_{ess}$, order the vertices $\sigma_0, \ldots, \sigma_n$ of $\sigma$ in such a way that the determinant of $\big(\sigma_1-\sigma_0, \ldots, \sigma_n-\sigma_0\big)$ is positive.
    Then the volume-sum function $$[0,1] \ni t \mapsto \Sigma_{\sigma \in T_{ess}} \text{det}\big(H(\sigma_1) - H(\sigma_0), \ldots, H(\sigma_n) - H(\sigma_0)\big)$$ is constant.
\end{lemma}

\begin{proof}
    The formula for the volume-sum function is given by a polynomial in coordinates of vertices of simplices in $T_{ess}$. At time $t$, the homotopy maps coordinate $v_i$ of a vertex $v$ to some $v_i + t u_i$, where $u_i$ depends only on $v_i$ and not on $t$. Fixing the simplex $\sigma$, its image via homotopy $H_t(\sigma)$ is by construction also a simplex in the ambient space, and thus we see that the function $t \mapsto \textup{Vol}(H_t(\sigma))$ is a polynomial of one real variable. Hence, the same is true for the sum of all volumes over all simplices of maximal dimension in $T$. At $t = 0$, this is simply the volume of the triangulated polytopal complex. For small values of $t$, the vertices of the triangulation remain in the interior of $P$ and no $H_t(\sigma)$ degenerates. Hence, the corresponding images also give a triangulation of the polytopal complex, with all orientations compatible with the orientation of $P$, so this too must equal the volume of $P$ (recall that $H$ fixes the boundary). Thus, we see that the sum of $\textup{Vol}(H_t(\sigma))$'s is a polynomial function of one real variable $t \in [0,1]$ which is constant in some neighborhood of $0$. It follows that this must be a constant function.
\end{proof}

There is no need to consider the seemingly more complicated case when the dimension of $P$ is smaller than the dimension of the ambient space: if our homotopy does not take a vertex outside of $P$, then after perhaps a (linear) change of coordinates, we are reduced back to the movement in the space of the dimension of $P$.

\begin{remark}
    For the sake of simplifying the initial exposition, we made one simplifying assumption that we neither need nor want in what follows - we will allow some of the vertices on the boundary to move but only in a way that the shape of the polytopal complex is unaffected (at least for a short time) - vertices of $T$ contained in $\partial P$ are allowed to move within the smallest face to which they belong (in particular, vertices of $P$ from $\partial P$ are stationary). Vertices that are in $\text{int}\ P$ are allowed to move freely in the interior of the cell to which they belong. At time $t=0$, every point was in exactly one simplex of the triangulation (save for the boundaries of simplices, which are of measure zero). If at some time $t_0$ a pair $H_{t_0}(\sigma_1), H_{t_0}(\sigma_2)$ ($\sigma_1 \neq \sigma_2$) has an intersection of nonzero measure, then their interiors have a nonempty intersection, and then it follows that some pair of faces $\sigma_1' \subset \sigma_1, \sigma_2' \subset \sigma_2$ (not necessarily of the same dimension) that were disjoint at $t=0$ intersected at some $t_1 < t_0$ - $H_{t_1}(\sigma_1') \cap H_{t_0}(\sigma_1') \neq \emptyset$. Since functions describing the distance between such pairs are continuous in $t$, and there are finitely many of these functions, we see that for some short time interiors of all simplices were disjoint. Hence, up to a measure zero set, every point was inside at most one simplex (obviously positively oriented). Moreover, $H_t$ is surjective - it attains all of the extreme points of the polytopal complex by the assumption and for the remaining points one can use induction on the dimension of the face in which the point is contained (and the fact that cells are convex sets).
\end{remark}

We will also make use of the following fact from field theory (see Browkin \cite{fields} or Milne \cite{fields2}):

\begin{lemma}
    Suppose that $\mathbb{K} \subset \mathbb{L}$ is a field extension. Then there exists a field $\mathbb{M}$ such that $\mathbb{K} \subset \mathbb{M} \subset \mathbb{L}$, where the first extension is purely transcendental and the second is algebraic.
\end{lemma}

In particular, applying this to $\mathbb{K}$ being the field of real algebraic numbers and $\mathbb{L} = \mathbb{R}$, we obtain a subfield $\mathbb{M}$ of $\mathbb{R}$ containing $\mathbb{K}$, purely transcendental over $\mathbb{K}$. Comparing cardinalities, we see that $\mathbb{M}$ has an uncountable transcendence basis $\{x_i\}_{i \in I}$ over $\mathbb{K}$.

Using the second-countability of $\mathbb{R}$ and the fact that $\mathbb{K}$ is a dense subset of $\mathbb{R}$ (with respect to the Euclidean topology), we can assume that $\{x_i\}_{i \in I}$ is dense in $\mathbb{R}$. Let $\{U_n\}_{n \in \mathbb{N}}$ be a basis of the Euclidean topology on $\mathbb{R}$, and let $\{x_n\}_{n \in \mathbb{N}}$ be a countable subset of any transcendence basis $B$ of $\mathbb{M}$ over $\mathbb{K}$. For every $n \in \mathbb{N}$, there exists $q_n \in \mathbb{Q} \subset \mathbb{K}$ such that $x_n + q_n \in U_n$. Replace every $x_n$ in $B$ by $x_n + q_n$, obtaining the set $B'$. Since any nontrivial polynomial equation with coefficients in $\mathbb{K}$ satisfied by some elements of $B'$ gives a nontrivial polynomial equation with coefficients in $\mathbb{K}$ satisfied by some elements of $B$, $B'$ must be a transcendence basis of $\mathbb{M}$ over $\mathbb{K}$. By construction, it is dense in $\mathbb{R}$. 

The following lemma is an immediate consequence of this observation.

\begin{lemma}
    If $(x_1^1, \ldots, x_d^1), \ldots, (x_1^n, \ldots, x_d^n)$ are points in $\mathbb{R}^d$, then for $\epsilon > 0$ there exist $(y_1^1, \ldots, y_d^1), \ldots, (y_1^n, \ldots, y_d^n) \in \mathbb{R}^d$ such that $|x_i^j - y_i^j| < \epsilon$ for every $i, j$, where $y_1^1, \ldots, y_d^1, \ldots, y_1^n, \ldots, y_d^n$ are algebraically independent over the field of real algebraic numbers.
\end{lemma}

In what follows, it might be helpful to think of the numbers $y_1^1, \ldots, y_d^1, \ldots,$ $y_1^n, \ldots, y_d^n$ as variables in the polynomial ring $\mathbb{K}[y_1^1, \ldots, y_d^1, \ldots, y_1^n, \ldots, y_d^n]$ or as the counterpart of the notion of points being in a general position.
\sloppy
\section{Sperner's Lemma, Generalized Y Game and the Generalized \mbox{Shoelace Formula}}

In their original paper McLennan and Tourky \cite{sperner1}, authors used the oriented volume method to prove Sperner's Lemma. For the sake of motivation, we begin our discussion with a proof of a similar result, which is a generalization of the non-draw property of the Y game.

The Y game can be described as follows: on an equilateral triangle subdivided into congruent equilateral subtriangles (hence a polytope with a triangulation), two players take turns coloring vertices of the triangulation (each using exactly one of two distinct colors). When there is a connected monochromatic graph touching all sides of the board, the player who colored it wins. 

There is an obvious generalization to $n$ players and the board being an $n$-simplex (note that the following also applies to fewer than $n$ players and is obviously not true for more than $n$ players): each player, in turn, marks an unmarked vertex of a triangulation of $\Delta^n$, with the objective of being the first player to connect all faces of maximal dimension by a monochromatic subgraph. To show that a draw is impossible in that case, it is enough to show that every coloring of a triangulated $n$-simplex with $n$ colors contains a monochromatic connected subgraph touching every face of the simplex. Usually, the rules of the Y game specify a particular triangulation, but as the reasoning below shows, the property holds for any choice of triangulation.

\begin{proof}[Proof of Theorem \ref{gra-y}]
Suppose that the statement of the theorem is not true. Then there exists a coloring in which each maximal monochromatic connected subgraph $G$ can be assigned a face $F_G$ of the simplex such that $G \cap F_G = \emptyset$. Denote the vertex of $\Delta^n$ opposite to $F_G$ by $O_G$ (see Figure \ref{jeden}). Let each vertex from each $G$ travel in a straight line at a constant speed towards $O_G$ in time $t \in [0,1]$. Consider the sum of volumes of all simplices from the triangulation. It follows that this is a nonzero constant function (note that the $n+1$ vertices of $\Delta^n$ are stationary). However, each simplex from the triangulation has at least two vertices of the same color (and obviously, they are both contained in the same maximal monochromatic connected subgraph), hence they travel to the same vertex of $\Delta^n$. Thus, at time $t=1$, every simplex collapses and its volume is reduced to 0, a contradiction.
\end{proof}

\begin{figure}[H]
    \centering
    \includegraphics[scale=2.0]{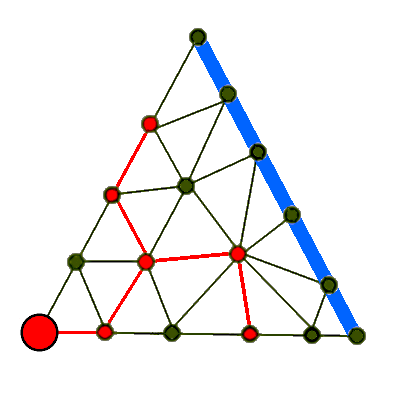}
    \caption{In the picture above, the graph $G$ is depicted in red, the face $F_G$ in blue, and the vertex $O_G$ is indicated by the biggest red dot.}
    \label{jeden}
\end{figure}

For future reference, note that in the proof above there are no restrictions on the geometric realization of a simplex. We are proving a combinatorial theorem using geometric tools; however, no assumptions on the geometric realization are placed. This will not be the case in the following sections.

For the sake of completeness, we describe Sperner's Lemma and roughly sketch its proof as given by McLennan and Tourky.

\begin{theorem}
    Let $\Delta^n$ be an $n$-dimensional simplex with a given triangulation $T$. Let $V$ be the set of vertices of $T$. Assume $V$ is colored with $n+1$ colors in such a way that every extreme point of $\Delta^n$ gets a different label, and every vertex contained in a face spanned by some subset $S$ of the extreme points is colored by a label assigned to some element of $S$. Then there exists an odd number of $n$-simplices $\sigma$ in $T$ such that vertices of $\sigma$ are colored with $n+1$ distinct colors.
\end{theorem}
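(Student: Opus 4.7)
The plan is to set up, following McLennan and Tourky, the straight-line homotopy driven by the colouring itself. Label the extreme points of $\Delta^n$ by $p_1,\ldots,p_{n+1}$, where $p_i$ is the unique extreme point carrying colour $i$. For each $T$-vertex $v$, define the target $v'=p_{f(v)}$ and let $H_t(v)=(1-t)v+tv'$ for $t\in[0,1]$, extending by barycentric coordinates over each simplex of $T$. The Sperner condition on the boundary is exactly what we need to know that each $v$ stays within the smallest face containing it: if $v$ lies in a face spanned by $S\subset\{p_1,\ldots,p_{n+1}\}$, then $f(v)\in S$, so $v'\in S$ and the segment $[v,v']$ stays inside that face. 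Thus the extreme points of $\Delta^n$ are fixed, boundary vertices glide within their carrying faces, and the construction fits the framework laid out earlier.

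Next I invoke the volume-preservation principle developed in the previous section: the sum $\Phi(t)=\sum_{\sigma\in T_n}\mathrm{Vol}(H_t(\sigma))$, taken with compatible orientations, is a polynomial in $t$ that is constant near $t=0$ (the images still triangulate $\Delta^n$ with the correct orientations), and therefore is identically constant. So $\Phi(0)=\Phi(1)=\mathrm{Vol}(\Delta^n)$.

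Now I analyse $H_1$. For any $\sigma\in T_n$, $H_1(\sigma)$ is the convex hull of the targets $p_{f(v_1)},\ldots,p_{f(v_{n+1})}$ of its vertices. This is degenerate (zero volume) unless the labels $f(v_1),\ldots,f(v_{n+1})$ are all distinct, i.e.\ unless $\sigma$ is a \emph{rainbow} simplex in the sense of the theorem. For each rainbow $\sigma$, $H_1(\sigma)$ is a permutation of the vertices of $\Delta^n$, so its oriented volume is $\varepsilon(\sigma)\mathrm{Vol}(\Delta^n)$ for some sign $\varepsilon(\sigma)\in\{\pm 1\}$. Letting $P$ be the number of rainbow simplices with $\varepsilon(\sigma)=+1$ and $N$ the number with $\varepsilon(\sigma)=-1$, the identity $\Phi(1)=\mathrm{Vol}(\Delta^n)$ becomes $(P-N)\mathrm{Vol}(\Delta^n)=\mathrm{Vol}(\Delta^n)$, i.e.\ $P-N=1$. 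Hence the total number of rainbow simplices is $P+N=1+2N$, which is odd.

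The only genuinely delicate point is the justification that $\Phi$ is truly constant, including across times $t$ at which individual summands cross through zero and flip orientation; but this is precisely the Cayley--Menger / analyticity argument already established in the preceding section, and here it applies verbatim because each vertex path is a polynomial in $t$. Everything else (the boundary invariance, the degeneracy at $t=1$, and the parity computation) is a direct reading off from the Sperner hypothesis and a sign count, so once the volume-sum is accepted as constant, the conclusion follows immediately.
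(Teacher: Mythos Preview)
Your proof is correct and follows essentially the same route as the paper's own proof: both set up the McLennan--Tourky straight-line homotopy sending each $T$-vertex to the extreme point sharing its colour, use constancy of the oriented-volume sum, and read off $P-N=1$ from the terminal configuration. Your write-up is in fact more explicit than the paper's sketch (you spell out why the Sperner condition keeps boundary vertices in their carrying faces and give the parity computation $P+N=1+2N$); the only minor over-statement is invoking the Cayley--Menger argument, since here the ambient dimension equals $n$ and the simpler determinant-polynomial argument from the same section already suffices.
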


\begin{proof}
    We can assume that $\textup{Vol}(\Delta^n)=1$. Let every vertex $v$ of $T$ travel at a constant speed in a straight line towards the extreme point of $\Delta^n$ labeled by the same color as $v$. As in the previous proof, we deduce that the sum of volumes of $n$-simplices from $T$ induces a nonzero constant function. At the end of the homotopy, every simplex of $H_1(\sigma)$ gives a summand equal to 0, 1, or $-1$ (respectively: if it has two vertices labeled in the same way, or if it has distinct labels agreeing or not with the labeling of the vertices of $\Delta^n$). Thus, we deduce that there is exactly one more $n$-simplex contributing $1$ than those contributing $-1$ (this is an easy case of the odd-covering theorem discussed later).
\end{proof}
We finish this section with a similar proof of a result that will be used later:
\begin{lemma}[Generalized Shoelace Formula]\label{shoelace}
    If $P$ is a $d$-dimensional polytopal complex in $\mathbb{R}^d$, then $\textup{Vol}(P) = \frac{1}{d!} \sum_{i \in I} \kappa(\sigma_i)$, where $\{\sigma_i\}_{i \in I}$ is the set of faces of $P$ and $\kappa(\sigma_i)$ is the determinant of the matrix with columns given by the vertices of $\sigma_i$ (order of columns is chosen accordingly to the orientation of $\sigma_i$).
\end{lemma}
\begin{proof}
    Triangulate $P$ (without adding any new vertices). Add a new vertex $v_j$ in the interior of every $d$-simplex $\tau_j$ of the triangulation. Divide every $\tau_j$ into $d+1$ $d$-simplices given by the convex hulls of the faces of $\tau_j$ and vertex $v_j$. Let each $v_j$ travel linearly to $0 \in \mathbb{R}^d$ at a constant speed, keeping all other vertices fixed, and consider the homotopy induced by these maps. For small values of $t$, the images of $\tau_j$'s still form a triangulation of $P$, so the sum of their volumes is independent of $t$. Exploiting the expression corresponding to $t=1$, namely the sum of oriented volumes of $d$-simplices with one vertex in $0 \in \mathbb{R}^d$, we see that $\kappa(\sigma_i)$'s corresponding to a face of more than one $\tau_j$ must vanish (each of them appears once with a positive sign and once with a negative sign, since they correspond to a common face of two $d$-simplices with the same orientation). Each of the remaining $\kappa(\sigma_i)$'s appears in the expression exactly once, with the standard orientation induced on the boundary from the $d$-simplex.
\end{proof}
\section{Algebraically Independent Coordinates}
The purpose of this section is mostly motivational and, with the exception of the last theorem, can be skipped by readers interested only in technical results. 

We will now be working with general $d$-polytopal complexes, not necessarily a simplex. As before, the volume-sum corresponding to the appropriate homotopy (in the pictures indicated by the coloring of vertices) is constant, so simplices that survive sum up (taking into account the orientation) to the initial volume. However, a priori we can't deduce how they are placed in the geometric realization, particularly if the images of simplices from the original triangulation via $H_1$ form a cover of the polytopal complex. 

For example, if we consider the triangulation of a polytope formed from two congruent triangles on the plane sharing exactly one edge, as depicted below, by counting volumes we only know that in the end we get at least two simplices with volumes equal to the volume of any of the triangles, and this is not enough to deduce that they are mapped to distinct triangles. However, if the starting blue and yellow triangles are chosen in such a way that the volume of the first one is not a rational multiple of the volume of the second one (in particular, they are not congruent), we can identify the simplices to which $C$ and $D$ must be mapped. To be precise, we need to take into account all the simplices arising from splitting our tetragon by the vertical diagonal, but as we will see later it is possible to find such a geometric realization that the volumes uniquely determine each simplex, no matter from which triangulation it came. Since the coloring is specified by the combinatorial data and not by a particular geometric realization, the claims we make will hold true for every realization. In the left picture below, we see that the triangle $C$ is mapped by the homotopy to the yellow triangle, while the triangle $D$ is mapped to the blue triangle. In the right picture, we can infer that only by comparing the areas: the blue and yellow triangles have different areas, so just by comparing the areas we know that two different simplices must be mapped onto them.

\begin{figure}[H]
    \centering
    \begin{subfigure}[t]{0.36\textwidth}
        \includegraphics[width=\textwidth]{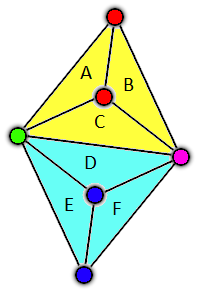}
        \caption{In this realization, it is impossible to distinguish between the yellow and blue triangles judging by their volume.}
        \label{fig:P}
    \end{subfigure}\hspace{10pt}
    \begin{subfigure}[t]{0.36\textwidth}
        \includegraphics[width=\textwidth]{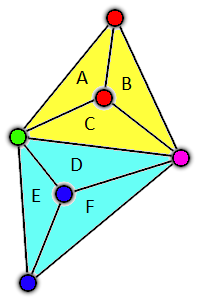}
        \caption{In this realization, that problem is not present.}
        \label{fig:Q}
    \end{subfigure}
    \caption{A comparison between two geometric realizations}
    \label{fig:animals}
\end{figure}

This suggests that it might be profitable to search for geometric realizations with an additional property: that distinguish simplices by their volumes. To that end, one (perhaps the most natural) strategy would be to provide some constructions concerning measure-theoretic properties of sets of solutions to the linear Diophantine equations describing sums of volumes. However, we choose a different approach, relying on field theory. This is simpler and, in some sense, gives the optimal solution to the problem: volumes of simplices and their faces are as linearly independent as possible. It also enables us to use the generalized shoelace formula in a simple manner and demonstrates the connection between the coloring problems and the statements about polynomials.

As one can easily see, any geometric realization of a triangulated polytopal complex remains a geometric realization under sufficiently small perturbation of any of its vertices (note that this is not the case when the polytopal complex is not triangulated, as the perturbation may violate affine dependence of the vertices). Thus we can find a geometric realization such that each coordinate of each vertex is a different element of the set $\{x_i\}_{i \in I}$, the transcendence basis of a field extension $\mathbb{K} \subset \mathbb{R}$ dense in $\mathbb{R}$ constructed at the end of the first section. If the dimension of the ambient space is equal to the dimension of the polytopal complex, we can use the generalized shoelace formula to prove linear independence of those volumes.

Suppose, for the sake of argument, that there exists a nontrivial linear relationship between the volumes of the simplices of some fixed triangulation $T$ with algebraically independent coordinates $\sum_{i \in I} a_i \textup{Vol}(i)$, where $I$ is the set of $d$-dimensional $T$-simplices. By induction, we can assume that for every polytopal complex with a triangulation having a strictly smaller number of $d$-simplices, any geometric realization with algebraically independent coordinates induces linearly independent volumes of the simplices. A monomial corresponding (via the generalized shoelace formula) to a face of any polytope contained in the boundary of that polytopal complex can be produced only from the volume of one specific simplex of the triangulation, namely the one having that face. Let us fix one such simplex and denote it by $j$. Since $\sum_i a_i \textup{Vol}(i) = 0$, we deduce that $a_j = 0$. By the inductive assumption applied to the polytopal complex formed by $I \setminus \{j\}$, all $a_i$'s must be equal to $0$.

Thus our goal is established - there are no nontrivial linear relationships between the volumes of the simplices, where by a trivial relationship we understand those arising from the fact that two sets of simplices may form two triangulations of the same subset of a polytopal complex (this idea will be made precise in the next theorem).

\begin{remark}
    Note that also the set of determinants of the form
    $$\begin{vmatrix} x_1^1 & \ldots & x_n^1 \\ \vdots & \ddots & \vdots \\ x_1^n & \ldots & x_n^n \end{vmatrix}$$
    (that is, determinants denoted by $\kappa(\sigma_i)$ in the generalized shoelace formula) where $(x_i^1, \ldots, x_i^n)$ are points with algebraically independent coordinates, is linearly independent.
\end{remark}

The next theorem will be used in the proof of the generalized Atanassov conjecture in the following section.

\begin{theorem}[Odd-Covering Theorem]
    Let $P$ be a $d$-polytopal complex with a triangulation $T$. For every $T$-vertex $v$, let $A_v$ be the affine space spanned by the minimal cell containing $v$. Assume that for every such $v$, there is a specified curve $\gamma_v: [0,1] \to A_v$ such that $\gamma_v(0) = v$. Let $H_t$ be a homotopy moving every vertex $v$ of $T$ by $\gamma_v$ extended to $P$ simplex by simplex linearly in the barycentric coordinates of $T$. If we denote the $d$-simplices of $T$ by $\tau_1, \ldots, \tau_m$, then up to a measure-zero set, every point of $P$ is contained in an odd number of sets $H_1(\tau_1), \ldots, H_1(\tau_m)$, and there is exactly one more such simplex with positive orientation.
\end{theorem}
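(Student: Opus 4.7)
My plan is to reinterpret the signed covering count $w_1(x) := n^+(x) - n^-(x)$ as the local mapping degree of the piecewise linear map $H_1 \colon P \to \mathbb{R}^d$ at $x$, and to apply homotopy invariance. The theorem follows once $w_1(x) = 1$ for almost every $x \in P$: then $n^+(x) + n^-(x)$ has the same parity as $n^+(x) - n^-(x) = 1$ and is therefore odd, with exactly one more positively oriented simplex as claimed.

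Concretely, I would define $s_i(t) \in \{+1, -1\}$ as the sign of the oriented volume of $H_t(\tau_i)$, extended continuously from $s_i(0) = +1$ by flipping at each isolated time when $H_t(\tau_i)$ degenerates, and set $w_t(x) = \sum_{i=1}^m s_i(t)\,\mathbf{1}_{x \in \mathrm{int}\,H_t(\tau_i)}$. First I would reduce, via the Stone--Weierstrass argument already invoked earlier in the paper, to the case that each $\gamma_v$ is a polynomial curve in $A_v$ starting at $v$. Then each $\mathrm{Vol}(H_t(\tau_i))$ is a polynomial in $t$, so $s_i(t)$ is well-defined off a finite exceptional set in $[0,1]$, and the a.e.\ conclusion in $x$ is stable under small perturbations of the $\gamma_v$, so the polynomial case implies the general one.

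For any regular $t$ and regular value $x$, each preimage in $H_t^{-1}(x)$ lies in the interior of a unique $\tau_i$ and contributes $\mathrm{sgn}(\det DH_t|_{\tau_i}) = s_i(t)$ to the degree; hence $w_t(x)$ is exactly the piecewise-linear mapping degree $\deg_x(H_t)$. By homotopy invariance, $\deg_x(H_t)$ is constant in $t$ provided $x$ avoids $H_t(\partial P)$ for every $t$; at $t = 0$ the map is the identity on $P$, so $w_0(x) = 1$ for every $x \in \mathrm{int}(P)$.

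It remains to verify the boundary condition off a null set. Every $(d-1)$-face $\sigma$ of $T$ lying in $\partial P$ is contained in some $(d-1)$-cell $C$ of $P$, and the hypothesis forces each vertex of $\sigma$ to remain in $\mathrm{aff}(C)$, so $H_t(\sigma) \subset \mathrm{aff}(C)$ throughout. Therefore $\bigcup_{t \in [0,1]} H_t(\partial P)$ is contained in the finite union of hyperplanes $\mathrm{aff}(C)$ taken over boundary $(d-1)$-cells of $P$, a set of $d$-dimensional Lebesgue measure zero. For $x \in \mathrm{int}(P)$ outside this union (and outside the similar null locus where the degree formula fails), we conclude $w_1(x) = w_0(x) = 1$. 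The main technical point is identifying $w_t(x)$ with the degree and controlling the isolated times of simplex degeneration; this can be handled by standard piecewise-linear degree theory, or, in the geometric spirit of the paper, by a direct cancellation argument showing that contributions from the two $d$-simplices sharing any interior $(d-1)$-face of $T$ sum to zero across that face regardless of their orientation signs.
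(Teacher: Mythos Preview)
Your proposal is correct and follows essentially the same route as the paper: both arguments identify the signed covering number with a mapping degree and invoke homotopy invariance from the identity, together with the observation that $H_t(\partial P)$ stays inside a finite union of affine hyperplanes and hence has measure zero. The only packaging difference is that the paper first encloses $P$ in a large $d$-simplex $\sigma$, extends $T$ and $H_t$ to $\sigma$ by fixing the new vertices, and reads off the classical Brouwer degree on the ball $\sigma$---this sidesteps your bookkeeping of local degree and the sign-flips $s_i(t)$, but is the same argument underneath.
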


\begin{proof}
    It is enough to prove the theorem for a $d$-polytopal complex $P$ embedded in $\mathbb{R}^d$, as we can split $P$ into parts of positive measure that are contained in distinct essential affine $d$-spaces and the homotopy will be independent of each subspace. If $P \subset \mathbb{R}^d$, there exists a $d$-simplex $\sigma$ such that $\textup{int}\ \sigma \supset P$. Extend the triangulation $T$ on $P$ to a triangulation $T'$ of $\sigma$. Define a homotopy $H': \sigma \times [0,1] \to \sigma$ by $H$ on the vertices of $P$, and fixing the vertices of $\sigma$ (and again extending in barycentric coordinates). It is easy to see that $\textup{Vol}\big(H(\sigma \setminus P, t) \cap P\big) = 0$ for any $t \in [0,1]$. Now the claim follows by the Brouwer degree - it must be $1$, since the constructed homotopy starts with an identity, and on the other hand it is computed by the sum of orientations of preimages of any point with a finite fiber.
\end{proof}

The Brouwer degree makes the proof really short and direct. Another way to prove the theorem is to introduce some further subdivisions (by consecutive subdivisions, so that the overlaps of the simplices at time 1 can be interpreted as sums of distinct simplices of some triangulation) in a fixed essential affine space and realize $P$ in this essential space with algebraically independent coordinates. By exploiting the linear independence of the volumes, each of the simplices must appear exactly one more time with a positive orientation than with a negative orientation to cancel out and not introduce a nontrivial relationship.

\section{Atanassov Conjecture}
The Atanassov conjecture (first proved by De Loera, Peterson, and Su in \cite{Atanassov}) is a generalization of Sperner's lemma to the case of polytopes. The labeling of the polytopal complex $P$ with triangulation $T$ is called a \emph{Sperner labeling} if every $T$-vertex $v$ that lies in some $r$-cell $C_i$ is assigned one of the labels of the vertices of $C_i$. This condition assures that the homotopy we will construct will indeed preserve the volume.

\begin{theorem}[Atanassov Conjecture]
    Suppose that $P$ is a (convex) connected $d$-polytope with $n$ vertices labeled with distinct colors and $T$ is a triangulation of $P$ endowed with a Sperner labeling extending the labeling of $P$. Then there are at least $n-d$ $d$-simplices of $T$ such that each of them is labeled with $d+1$ distinct colors.
\end{theorem}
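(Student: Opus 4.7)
The plan is to apply the oriented volume method to a geometric realization of $P$ whose vertex coordinates are algebraically independent over the field of real algebraic numbers, and then extract the bound from the odd-covering theorem. By the density lemma of Section 2, we may perturb the vertices of $T$ slightly so that all their coordinates belong to a fixed transcendence basis; the perturbation can be chosen small enough that $T$ is still a triangulation of $P$, $P$ is still convex, and the $n$ colours on $V(P)$ are still pairwise distinct. As explained in Section 4, in this realization the only linear relations between volumes of $d$-simplices on the vertices of $P$ are the trivial ones coming from geometric triangulation identities.

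Next I would construct the homotopy $H_t$ that moves each $T$-vertex $v$ at constant speed along a straight segment to the unique vertex of $P$ whose label equals the label of $v$. Existence and uniqueness of the target are immediate from the fact that the $n$ colours on $V(P)$ are all distinct, and the Sperner hypothesis guarantees that the target is a vertex of the minimal cell containing $v$; by convexity of the cells, the whole segment stays in that cell. In particular, $H_t$ satisfies the "each vertex stays in the affine span of its minimal cell" hypothesis of the odd-covering theorem, and, since every cell is preserved setwise, $\mathrm{vol}(P)$ is preserved throughout the homotopy.

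At time $t=1$, any $T$-simplex $\sigma$ with a repeated colour collapses to dimension strictly less than $d$, and thus contributes no $d$-volume; a fully coloured $\sigma$ is sent bijectively onto a specific $d$-subsimplex $S(\sigma)\subset V(P)$ with a well-defined orientation sign $\epsilon(\sigma)\in\{-1,+1\}$. For each $d$-subsimplex $S\subset V(P)$ set
\[
m_S \;:=\; \sum_{\sigma:\,S(\sigma)=S}\epsilon(\sigma).
\]
Applying the odd-covering theorem inside each essential affine $d$-space of $P$ gives $\sum_{S\ni x}m_S = 1$ for almost every $x\in P$; equivalently, the integer chain $c:=\sum_S m_S[S]$ is a relative fundamental class of $(P,\partial P)$. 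Since each fully coloured $T$-simplex contributes $\pm 1$ to exactly one $m_S$, the number of fully coloured $d$-simplices of $T$ is at least $\sum_S |m_S|$.

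It would then remain to prove the key combinatorial fact, which I expect to be the main obstacle: for any integer chain $c=\sum_S m_S[S]$ on $d$-simplices of $V(P)$ representing $[P,\partial P]$, one has $\sum_S |m_S| \geq n-d$. The clean approach is induction on $n$, with base case $n=d+1$ given by Sperner's lemma: pick a vertex $v\in V(P)$ such that $P':=\mathrm{conv}(V(P)\setminus\{v\})$ is still a convex $d$-polytope with $n-1$ vertices (such a $v$ exists, e.g.\ any vertex on a minimal face of $\partial P$ that is not a simplex face of $P'$); split $c$ into the part $c_v$ consisting of simplices containing $v$ and the part $c'$ not containing $v$, show that $c'$ represents $[P',\partial P']$ so the inductive hypothesis gives $\sum_{S\not\ni v}|m_S|\geq (n-1)-d$, and show that $c_v$ is nonzero because $v$ must be covered, contributing at least one further unit to the $\ell^1$-norm. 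An alternative route is to reduce the signed chain $c$ to an unsigned simplicial triangulation of $P$ by cancelling oppositely oriented simplices and invoking the classical lower bound that any triangulation of a convex $d$-polytope on its $n$ vertices uses at least $n-d$ maximal simplices; the delicate point there is to check that such cancellation cannot decrease the $\ell^1$-norm below the final triangulation size.
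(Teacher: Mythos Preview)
Your setup---the homotopy sending each $T$-vertex to the vertex of $P$ carrying its label, the odd-covering theorem, and the reduction to counting the signed multiplicities $m_S$---matches the paper exactly. Two points, one technical and one substantial.

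\textbf{Technical.} You perturb to algebraically independent coordinates \emph{before} running the homotopy. If $P$ has a non-simplicial facet, a generic perturbation destroys the facet structure (the vertices of that facet are no longer affinely dependent), and the Sperner condition no longer forces the trajectory of a boundary $T$-vertex to stay inside its cell. The paper avoids this by first applying the homotopy and the odd-covering theorem in the \emph{original} realization---where the affine constraints hold---and only afterwards passing to an algebraically independent realization, using that the conclusion of the odd-covering theorem is purely combinatorial.

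\textbf{Substantial.} The step you flag as the main obstacle is indeed the crux, and neither of your sketches works as written. For the induction, the claim that $c'$ represents $[P',\partial P']$ is false in general: take $P$ the square on vertices $a,b,c,d$, $c=[abc]+[acd]$, and remove $v=a$; then $c'=0$, which certainly does not represent the triangle $bcd$. Your choice of $v$ depends only on $P$, not on $c$, so nothing prevents this. The alternative route (cancel opposite signs and appeal to the triangulation lower bound) is worse: an integer cycle like $2[S_1]-[S_2]+\cdots$ need not cancel down to an honest triangulation at all.

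The paper closes this gap differently, via the generalized shoelace formula. In the algebraically independent realization, $\mathrm{Vol}(P)$ expands as a sum of determinants $\kappa(F)$ over the $(d-1)$-facets $F$ of $\partial P$, while each $\mathrm{Vol}(S)$ expands as $\sum_{G\text{ facet of }S}\kappa(G)$; the monomials $\kappa(G)$ for distinct $(d-1)$-subsets $G\subset V(P)$ are linearly independent. Equating the two expressions for $\mathrm{Vol}(P)=\sum_S m_S\,\mathrm{Vol}(S)$ and matching coefficients, one runs a greedy argument: start with any $\tau_1$ with $m_{\tau_1}\neq 0$; if $n>d+1$ then $\tau_1$ has a facet not in $\partial P$, whose determinant must be cancelled by some $\tau_2\neq\tau_1$; if a vertex of $\partial P$ is still missing from $\tau_1\cup\tau_2$, the corresponding boundary determinant forces a $\tau_3$; and so on. This directly yields at least $n-d$ distinct $S$ with $m_S\neq 0$, hence at least $n-d$ fully coloured $d$-simplices in $T$.
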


Below we show an easy new proof of a slight generalization of this conjecture, obtained by combining the method of oriented volume and the use of algebraically independent coordinates.

First, we need to introduce a new notion. We say that two $d$-simplices $\sigma$, $\sigma'$ of $P$ are in the same \emph{strongly connected} component if there exists a sequence of $d$-simplices $\sigma_0, \sigma_1, \ldots, \sigma_n$ such that $\sigma_0 = \sigma$, $\sigma_n = \sigma'$ and each pair $\sigma_i, \sigma_{i+1}$ share a common $(d-1)$-face (see \cite{ziegler} p. 101 for a different variant of that notion). It is easy to see that this is an equivalence relation on the set of $d$-simplices of $P$, hence we can define the strongly connected component of some $d$-simplex $\sigma$ as the union of all simplices in its equivalence class. Of course, the triangulation of $P$ restricts to a triangulation of any strongly connected component. We say that a $d$-polytopal complex is strongly connected if it has exactly one strongly connected component.

\begin{theorem}[Generalized Atanassov Conjecture]
    Suppose that $P$ is a strongly connected $d$-polytopal complex which has $n$ vertices on the boundary and $T$ is a triangulation of $P$ endowed with a Sperner labeling extending the labeling of $P$. Then there are at least $n-d$ $d$-simplices of $T$ such that each of them is labeled with $d+1$ distinct colors.
\end{theorem}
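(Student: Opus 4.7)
The plan is to combine the oriented-volume homotopy of Section~3 with the algebraically independent realization developed in Section~4. First I would choose a geometric realization of $P$ in which the coordinates of every $T$-vertex (in particular the $n$ boundary vertices of $P$) are algebraically independent over the field $\mathbb{K}$ of real algebraic numbers; this is legitimate by the density lemma of Section~2 together with the observation that geometric realizations are stable under small perturbations. Since the statement is combinatorial, establishing it for this single realization suffices.

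Next I would construct the homotopy $H_t$ that moves every $T$-vertex $v$ linearly and with constant speed towards the vertex of $P$ carrying the label $f(v)$. The generalized Sperner condition guarantees that this target lies in the affine span of the minimal cell containing $v$, so $v$ stays within its allowed affine subspace and the total oriented volume is preserved along the homotopy. At $t=1$, every $d$-simplex of $T$ with a repeated colour collapses, while each fully-labelled $d$-simplex $\tau$ maps to the $d$-simplex $\sigma_{S_\tau}$ spanned by the $d+1$ distinctly coloured vertices of $P$ prescribed by its labels. Combining the volume-preservation with the odd-covering theorem yields the oriented-chain identity
\[
\sum_S N_S \, [\sigma_S] \;=\; [P],
\]
where $S$ ranges over $(d+1)$-subsets of the boundary vertices of $P$ and $N_S\in\mathbb{Z}$ is the signed count of fully-labelled $\tau$'s with $H_1(\tau)=\sigma_S$. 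The role of algebraic independence is essential here: by Section~4 the only $\mathbb{K}$-linear relations among the volumes $Vol(\sigma_S)$ come from overlapping sub-triangulations, so the volume identity $\sum_S N_S\, Vol(\sigma_S)=Vol(P)$ actually lifts to the chain identity above.

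Since the total number of fully-labelled $d$-simplices of $T$ is at least $\sum_S|N_S|$, the theorem reduces to the combinatorial claim that any integral chain decomposition $\sum_S N_S [\sigma_S]=[P]$ of a strongly-connected $d$-polytope $P$ on its $n$ boundary vertices satisfies $\sum_S|N_S|\geq n-d$. I would prove this by induction on $n$, the base case $n=d+1$ being immediate since then $P$ itself is a simplex and the chain must be $[\sigma_{V(P)}]$. For the inductive step I would select a boundary vertex $v$ of $P$, analyse the sub-chain of $\sigma_S$'s containing $v$, and use strong connectedness to argue that removing $v$ together with these simplices produces a chain decomposition of a smaller strongly-connected polytope $P'$ with $n-1$ boundary vertices, to which the induction hypothesis applies. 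The main obstacle is precisely to control sign cancellations among the $N_S$'s with $v\in S$; this is where strong connectedness is crucial, as it forbids the chain from splitting into independent pieces in which arbitrary signed cancellations could be hidden.
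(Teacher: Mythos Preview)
Your high-level plan---collapse each $T$-vertex onto its label via $H_t$, pass to an algebraically independent realization, and extract the count from the resulting volume/chain identity---is exactly the paper's. One minor ordering issue: you perturb to algebraically independent coordinates \emph{before} invoking the generalized Sperner condition, but that condition is phrased in terms of affine spans of cells of $P$, and a generic perturbation destroys those spans (a target vertex that was only in the affine span of the minimal cell, not a vertex of it, need no longer lie there). The paper does this in the opposite order: it first applies the homotopy and the odd-covering theorem in a realization where the original affine structure is intact, and only afterwards passes to algebraically independent coordinates, observing that the conclusion of odd-covering is purely combinatorial and hence survives the change of realization.

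The more substantial gap is your final step. You correctly reduce to the claim that any integral relation $\sum_S N_S[\sigma_S]=[P]$ forces $\sum_S|N_S|\ge n-d$, and you propose to prove it by deleting a boundary vertex $v$ and inducting on $n$. You also correctly identify the obstacle---controlling sign cancellations among the $N_S$ with $v\in S$---but you do not resolve it, and it is not clear how ``remove $v$ and pass to a smaller strongly-connected $P'$'' can be carried out at the chain level. The paper does not induct on $n$. Instead it builds the simplices greedily via the generalized shoelace formula: with algebraically independent coordinates the facet-determinants are $\mathbb{K}$-linearly independent, so any facet of a surviving $\tau_1=\sigma_{S_1}$ that is not contained in $\partial P$ must have its determinant cancelled by some $\tau_2$ sharing that very facet, whence $\tau_2$ contributes at most one new boundary vertex beyond those of $\tau_1$. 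Iterating until the accumulated shoelace expression matches that of $Vol(P)$---which cannot happen before every boundary vertex has appeared---produces a chain $\tau_1,\ldots,\tau_l$ with $l\ge n-d$. This facet-by-facet matching is exactly where strong connectedness is used, and it sidesteps the vertex-by-vertex sign-cancellation problem entirely.
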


\begin{proof}
    Divide $P$ into polytopal complexes $P_1, \ldots, P_m$ that are intersections of $P$ with essential affine $d$-spaces (a point in a $d$-cell lies in $P_i$ iff the whole cell does so) and subdivide each $P_i$ further into its strongly connected components $P_i^1, P_i^2, \ldots, P_i^n$. It is enough to prove the theorem for each such $P_i^j$ (with $k_i^j$ being the number of vertices on its boundary) separately and to use the assumption that $P$ is strongly connected to globalize the result: let $P$ be a strongly connected $d$-polytopal complex that can be divided into two $d$-polytopal complexes $P_1$, $P_2$ satisfying both the assumptions and the statement of our theorem, with $P_1 \cap P_2$ being a set of their common faces of a dimension smaller than $d$. Let $n_i$ be the number of vertices on the boundary of $P_i$ and $s_i$ be the number of its fully colored $d$-simplices. Then $n_i - d \leq s_i$ and since $P$ is strongly connected, $P_1$ and $P_2$ share at least $d$ vertices, so $n - d \leq n_1 + n_2 - 2d \leq s_1 + s_2$.

    Note that the Sperner labeling forces the homotopy to be volume-preserving and to satisfy the odd-covering theorem. Having established that, fix now a geometric realization of $P_i$ in $\mathbb{R}^d$ with algebraically independent coordinates. The odd-covering theorem implies that the images by the homotopy of $d$-simplices in this realization still cover $P_i$; in particular, their sum contains all vertices of $P_i$. First, such image $\tau_1$ contains $d+1$ vertices. If $k_i^j = d+1$, we are done. If not, then there exists at least one face $\sigma$ of $\tau_1$ that is not contained in the boundary of $P_i^j$, hence the determinant corresponding to this face in the expression for $\textup{Vol}(\tau_1)$ must be canceled by the determinant corresponding to some $\textup{Vol}(\tau_2)$. If $k_i^j = d+2$, we are done. If not, then the expression for the volume sum must contain a determinant corresponding to the face of some $\tau_3$ distinct from $\tau_1$ and $\tau_2$ (the formula for the volume must take into account the vertex of $\partial P_i^j$ that is not on the boundary of $\tau_1 \cup \tau_2$). Proceeding this way, we finish when the expression for the volume of $\tau_1 \cup \ldots \cup \tau_l$ agrees with the expression for the volume of $P_i^j$. If there are $\overline{k_i^j}$ vertices of $P_i^j$ that are vertices of some $\tau_i$, then $l \geq \overline{k_i^j} - d$.  
\end{proof}

Below we show a different result obtained by similar reasoning (the homotopy is implicit in the labelings, just as before).

\begin{theorem}
    Let $P$ be a $d$-dimensional polytopal complex. Let $T$ be a triangulation of $P$ endowed with a Sperner labeling, with each vertex on the boundary of $P$ being labeled with a different color. Let $S$ be a subset of simplices of $T$ such that $\partial P \not\subset \bigcup_{s \in S} \partial H_1(s)$. Then there exists a fully-colored simplex in $T$ with a labeling different from the labeling of any simplex $s \in S$.
\end{theorem}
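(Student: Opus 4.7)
My approach is to follow the template of the proof of the generalized Atanassov conjecture: realize $P$ with algebraically independent vertex coordinates (using the construction from Section~2), and apply the volume-preserving homotopy $H_t$ induced by the labelling (the generalised Sperner condition guarantees volume preservation). A $d$-simplex of $T$ has a non-degenerate image under $H_1$ precisely when it is full-coloured, so
\[
\sum_{\tau\text{ full-coloured}}\pm\,Vol(H_1(\tau)) \;=\; Vol(P),
\]
with sign depending on the orientation of the image.

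Next, fix a triangulation $T_\partial$ of $\partial P$ by $(d-1)$-simplices whose vertices are all vertices of $P$ (arising, e.g., from a triangulation of $P$ on its own vertices, as in the proof of the shoelace lemma). The generalised shoelace formula yields $Vol(P)=\frac{1}{d!}\sum_{\sigma\in T_\partial}\kappa(\sigma)$, while applying it to each $H_1(\tau)$ on the left-hand side gives a formal $\mathbb{K}$-linear combination of determinants $\kappa(\sigma')$ over $(d-1)$-simplices $\sigma'$ with vertices among the vertices of $P$. By algebraic independence of the coordinates, determinants corresponding to distinct unordered $d$-subsets of vertices of $P$ are linearly independent over $\mathbb{K}$, each being a polynomial in its own variable set. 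Matching coefficients of $\kappa(\sigma)$ for a fixed $\sigma\in T_\partial$ shows that the signed count of full-coloured $\tau\in T$ having $\sigma$ as a $(d-1)$-face of $H_1(\tau)$ equals $1$.

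Finally I exploit the hypothesis. The set $\partial P\setminus\bigcup_{s\in S}\partial H_1(s)$ is open in $\partial P$ and nonempty, so it contains a point $x$ lying in the relative interior of some $\sigma\in T_\partial$. This $\sigma$ is not a $(d-1)$-face of $H_1(s)$ for any $s\in S$, since the inclusion $\sigma\subset\partial H_1(s)$ would place $x$ in the forbidden union. By the previous paragraph there exists a full-coloured $\tau\in T$ with $\sigma$ a face of $H_1(\tau)$. If the colouring of $\tau$ agreed with the colouring of some $s\in S$, then $s$ would be full-coloured as well and the identity $H_1(s)=H_1(\tau)$ would follow, because both simplices are determined by the same $d+1$ vertices of $P$; but then $\sigma$ would be a face of $H_1(s)$ after all, a contradiction. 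Hence the labelling of $\tau$ differs from that of every $s\in S$.

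The main obstacle is making the topological hypothesis translate cleanly into the combinatorial statement ``some $\sigma\in T_\partial$ is a face of no $H_1(s)$''. This step relies on choosing $x$ generic enough to lie in the relative interior of a single simplex of $T_\partial$, and on the use of algebraic independence to guarantee that distinct unordered vertex sets produce genuinely distinct $(d-1)$-faces, so that a face of $H_1(s)$ cannot accidentally cover part of $\partial P$ without being a face of $H_1(s)$ in the combinatorial sense.
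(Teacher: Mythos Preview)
Your argument is correct and follows the same strategy as the paper: realize $P$ with algebraically independent coordinates, use that the generalized Sperner labelling makes the homotopy volume-preserving, and compare the shoelace expansion of $Vol(P)$ with that of $\sum \pm Vol(H_1(\tau))$ to detect a boundary facet not accounted for by the labellings in $S$. The paper runs this as a short contradiction (if every full-coloured simplex were labelled like some $s\in S$, then $Vol(P)$ would be an integer combination of the $Vol(H_1(s))$, yet some boundary-facet monomial would be missing), whereas you unfold the same idea into a direct argument by isolating a specific $\sigma\in T_\partial$ and a specific $\tau$; the extra care you take in the last paragraph about the genericity of $x$ is exactly what makes the topological hypothesis translate into the combinatorial one, and algebraic independence indeed forces distinct $(d-1)$-simplices on the vertices of $P$ to have distinct affine spans, so their overlaps are lower-dimensional.
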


\begin{proof}
    Fix a geometric realization of $P$ with algebraically independent coordinates. Suppose that there is no fully-colored simplex colored differently than simplices of $S$. The sum of volumes induced by the appropriate homotopy remains constant and equal to $\textup{Vol}(P)$. By our assumptions, every simplex labeled differently than simplices in $S$ collapses under the homotopy (since at least two of its vertices are mapped to the same point). Hence, we get that $\textup{Vol}(P)$ is equal to $\sum_{s \in S} n_s \textup{Vol}(s)$ with $n_s \in \mathbb{Z}$. However, $\textup{Vol}(P)$ is linearly independent from $\{\textup{Vol}(s_i)\}_{s \in S}$, since the expression for $\textup{Vol}(P)$ obtained from the generalized shoelace formula contains monomials corresponding to every facet of $P$, and $S$ must be lacking some of them, a contradiction.
\end{proof}

\section{Ky Fan's Lemma}
Our next step is to prove Ky Fan's lemma, again using the volume method and algebraically independent coordinates. The proof below is similar to the original proof by Ky Fan and can be considered as a more algebraic reformulation, showing that the pairing argument used in Fan \cite{kyfan} can be understood on the level of algebraic expressions associated with the volume.

If $P$ is a centrally symmetric polytope homeomorphic with $\mathbb{B}^n$, with $T$ being the triangulation of $P$ which restricts to a centrally symmetric triangulation on $\partial P$, we call the labeling of the vertices of $T$ a \emph{Fan labeling} if:
\begin{itemize}
    \item labels are from $\{-m,-m+1,\ldots,-2,-1,1,2,\ldots,m-1, m\}$ for some $m \in \mathbb{N}$;
    \item opposite vertices on $\partial P$ are labeled by opposite numbers;
    \item there is no edge in $T$ with vertices labeled by opposite numbers.
\end{itemize}

\begin{theorem}[Ky Fan's Lemma, \cite{kyfan}]
    Let $P$ be a barycentric derived subdivision of the octahedral subdivision of the $n$-disk $\mathbb{B}^n$, with $T$ being a triangulation of $P$ which restricts to a centrally symmetric triangulation on $\partial P$. Suppose that $T$ is endowed with a Fan labeling. Then there is an odd number of alternating $n$-simplices in $T$, where a simplex is alternating if it is colored by $n+1$ distinct colors in such a way that when they are ordered by absolute value, their signs alternate.
\end{theorem}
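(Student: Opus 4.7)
The plan is to follow the template from the earlier Sperner- and Atanassov-type results in this paper: build a homotopy driven by the Fan labelling, apply the volume-preservation principle, and extract the count of alternating $n$-simplices from the resulting identity after fixing a realization with algebraically independent coordinates. The central-symmetry constraint on $\partial P$ forces a small adaptation of the realization step. I would choose any set of representatives of the antipodal pairs of boundary $T$-vertices and apply the perturbation lemma to these together with every interior $T$-vertex, defining each remaining boundary vertex to be the antipode of its partner; the resulting coordinates remain algebraically independent over the field of real algebraic numbers, and the triangulation on $\partial P$ stays centrally symmetric.

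Next I would fix an odd target map $i\mapsto t_i\in\mathbb{R}^n$ with $t_{-i}=-t_i$ (the odd moment curve $t_i = \mathrm{sign}(i)\bigl(|i|,|i|^3,\ldots,|i|^{2n-1}\bigr)$ is a convenient concrete choice) and let $H_t$ slide each $T$-vertex $v$ linearly toward $t_{\ell(v)}$. Since $P$ is $n$-dimensional inside $\mathbb{R}^n$, the discussion of Section 3 gives $\sum_\tau Vol\bigl(H_t(\tau)\bigr)=Vol(P)$ for all $t\in[0,1]$. At $t=1$ every $n$-simplex with a repeated label collapses, and by the Fan condition no surviving $n$-simplex contains a pair of antipodal labels, so only those labelled by $n+1$ nonzero values with pairwise distinct absolute values $|c_1|<\ldots<|c_{n+1}|$ contribute. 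Grouping the survivors by their unordered label tuple $C=\{c_1,\ldots,c_{n+1}\}$ yields
\[ Vol(P) \;=\; \sum_{C} N_C\cdot V_C, \]
where $V_C$ is the oriented volume of the simplex with vertices $t_{c_1},\ldots,t_{c_{n+1}}$ and $N_C\in\mathbb{Z}$ is the signed multiplicity of $n$-simplices of $T$ with label tuple $C$.

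The remaining task is to interpret Ky Fan's pairing argument as a linear relation in the identity above. The involution $C\mapsto -C$ induced by the central symmetry partitions the admissible tuples into orbits, and by algebraic independence the generalized shoelace formula for $Vol(P)$ uniquely determines the coefficients $N_C$ when $Vol(P)$ is expanded in the basis of monomials in the boundary coordinates. I would then show that within each orbit the non-alternating contributions cancel pairwise (the centrally symmetric boundary pairs such simplices across codimension-one faces, exactly as in \cite{kyfan}, only now realised as a linear dependence among the determinants $V_C$ associated to the odd target map), while the alternating tuples remain as the only surviving summands; a parity count on the matching boundary volumes then forces the total signed number of alternating $n$-simplices to be odd.

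The main obstacle will be making the non-alternating cancellation precise at the level of the algebraic expression: one has to verify that every non-alternating admissible tuple participates in a cancellation dictated by a codimension-one face shared with either $-C$ or a neighbouring tuple obtained by flipping a single sign, and that no Fan-labelling constraint obstructs this pairing. Once this is established, the algebraic independence of the realization coordinates immediately yields the desired odd parity.
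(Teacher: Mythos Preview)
Your homotopy sends \emph{every} $T$-vertex, including those on $\partial P$, to the target point $t_{\ell(v)}$. This breaks the volume-preservation principle you invoke: the argument in Section~3 shows that $\sum_\tau Vol\bigl(H_t(\tau)\bigr)$ is constant only because for small $t$ the images $H_t(\tau)$ still form a triangulation of the \emph{same} region $P$, and that in turn relies on boundary $T$-vertices staying inside the minimal face of $P$ containing them. Once boundary vertices drift toward arbitrary targets $t_i$, the polynomial $t\mapsto\sum_\tau Vol\bigl(H_t(\tau)\bigr)$ has no reason to be constant; at $t=1$ it equals the signed volume enclosed by the image of $\partial P$ under your label-target map, which depends on the labelling and is generally not $Vol(P)$. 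So the identity $Vol(P)=\sum_C N_C\,V_C$ on which the rest of the argument hangs is simply false. (Keeping the boundary centrally symmetric throughout, which your odd target map does achieve, does not help here.)

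The paper avoids this by proceeding inductively and keeping $\partial P$ fixed: interior vertices move to \emph{boundary} vertices of the same colour (or stay put if no such boundary vertex exists), so volume is genuinely preserved. The payoff of the induction is information about alternating $(n-1)$-facets on $\partial P$, and the comparison is then between two expressions for $Vol(P)$: the generalized shoelace formula over $\partial P$ on one side, and $\sum_\sigma Vol\bigl(H_1(\sigma)\bigr)$ expanded face-by-face on the other. The key combinatorial fact is that an alternating $n$-simplex has exactly two alternating facets, obtained by deleting the extreme labels, while a non-alternating $n$-simplex with distinct absolute values has either zero or two alternating facets of the \emph{same} sign; matching monomials (guaranteed by algebraic independence) then yields the parity. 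Your proposed cancellation via $C\mapsto -C$ acts on label tuples of $n$-simplices rather than on boundary facets, and there is no antipodal symmetry on $n$-simplices in the interior of $P$ to exploit, so even with a corrected volume identity that pairing would not go through as written.
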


\begin{remark}
    As explained in \cite{kyfan}, "barycentric derived" means "derived by the successive application of a finite number of barycentric subdivisions." Ky Fan's lemma (and its generalized version Theorem \ref{mult-kyfan}) are true for a wider class of centrally symmetric polytopes homeomorphic with $\mathbb{B}^n$, the crucial condition being the possibility to divide their boundary into two antipodally isometric polytopes to which Ky Fan's lemma can be applied. Nonetheless, this is the most common formulation, sufficient for usual topological applications.
\end{remark}

\begin{proof}
    We proceed by induction. The case of $n=1$ is obvious. For the inductive step, pick a realization of $n+1$-dimensional $P$ with algebraically independent coordinates. Note that we will not concern ourselves with preserving the Fan labeling in the geometric sense (as we did with the Sperner labeling before), as the geometric sense of antipodal vertices is irrelevant - we only keep in mind the abstract combinatorial $\mathbb{Z}_2$ action originating from the geometric symmetry.

    Split $\partial P$ into two triangulated hemispheres, which are antipodally symmetric (again, in the abstract sense). We can apply the inductive assumption to them, obtaining that on each there is an odd number of alternating facets and thus the total number of alternating facets of $\partial P$ is divisible by 2, but not by 4, with each facet being paired with its antipodal version. Therefore, there is an odd number of alternating facets whose label of lowest absolute value has a negative sign, and similarly for a positive sign. We call this sign the sign of the alternating facet.

    Apply a homotopy moving each vertex from the interior of $P$ to some vertex of $\partial P$ with the same color (if there is no $i$-colored vertex in $\partial P$, $i$-vertices remain stationary). As always, the volume is preserved under this homotopy, and we inspect the expressions arising from the generalized shoelace formula for each $n+1$-simplex $H_1(\sigma)$. An alternating simplex has two alternating facets - obtained by deleting the vertex with the lowest and highest absolute value of the label, respectively. A non-alternating simplex with distinct absolute values of labels presents two possibilities - it either can have only one pair of adjacent (i.e., when ordered by absolute value) labels of the same sign and thus has two alternating facets obtained by deleting any one of the corresponding vertices (the facets then have the same sign) or has no alternating facets at all. The same holds for simplices with repetitions in labeling.

    By the generalized shoelace formula, the volume of $P$ equals the sum of determinants assigned to facets of the boundary and also equals the sum of $\textup{Vol}(H_t(\sigma))$. Comparing these two expressions and using the facts above, we get the claim.
\end{proof}

\section{Product of Two Simplices and Multilabeled Sperner's Lemma}
In this section, we prove a new theorem about the coloring of the product of two simplices. As a corollary, we obtain the already known multilabeled version of Sperner's lemma. For a purely combinatorial proof and a more elaborate discussion, one can see Meunier and Su \cite{multi}, however, one should note the subtle differences between statements given in this chapter and theorems in Meunier and Su \cite{multi} - we are dealing with triangulated balls instead of arbitrary free $\mathbb{Z}_2$-complexes and our results claim an odd number of simplices of a given type (instead of merely their existence). To this end, we slightly modify the technique of algebraically independent coordinates.

Consider two simplices $\Delta^n$ and $\Delta^m$ embedded in $\mathbb{R}^n$ and $\mathbb{R}^m$. Taking them to be the standard simplices, we get that their volumes are $\frac{1}{n!}$ and $\frac{1}{m!}$. In their product $\Delta^n \times \Delta^m$, consider the fibers of $\pi_2: \Delta^n \times \Delta^m \to \Delta^m$ to be the copies of $\Delta^n$. We can enumerate the vertices of $\Delta^m$ by $v_0, \ldots, v_m$ and in an affine copy of $\mathbb{R}^n$ we can rescale the vertices of the 0-th copy of $\Delta^n$ by the constant $r_0$, and so on with constants $r_0, \ldots, r_m$. The convex hull of all the vertices obtained this way is a new geometric realization of $\Delta^n \times \Delta^m$ in $\mathbb{R}^n \times \mathbb{R}^m$. We call an $m+n$-simplex from the triangulation of $\Delta^n \times \Delta^m$ a $(k_0, \ldots, k_m)$-simplex if it has $k_i$ vertices in $\pi_2^{-1}(v_i)$ for every $i \in \{0, 1, \ldots, m\}$. We define a $(l_0, \ldots, l_n)$-simplex similarly by turning to the other projection.

\begin{lemma}
    The volume of a $(k_0, \ldots, k_m)$-simplex with $k_0 + \ldots + k_m = m+n+1$ is equal to $r_0^{k_0-1} \cdots r_m^{k_m-1} \frac{1}{n! m!} / \binom{n+m}{n}$.
\end{lemma}

\begin{proof}
    It is well known that in the case $r_0 = r_1 = \ldots = r_m = 1$ all the simplices of $\Delta^n \times \Delta^m$ have the same volume and each triangulation contains $\binom{n+m}{n}$ of them (proof can be found in De Loera, Rambau and Santos \cite{triangulations}). Hence, it is enough to show that changing $r_i$ to $r_i'$ corresponds to multiplying the volume by $\big(\frac{r_i'}{r_i}\big)^{k_i-1}$.

    Fix a $(k_0, \ldots, k_m)$-simplex $\sigma$. Consider the affine subspace $W$ of codimension 1 spanned by the vertices of $\Delta^n \times \Delta^m$ that are not in $\pi_2^{-1}(v_i)$. Let $H$ be the height of $\Delta^m$ from the vertex $v_i$ to the opposite face. We foliate $\Delta^n \times \Delta^m$ by the affine spaces parallel to $W$ to compute the volume of $\sigma$ by integrating over $h$ volumes of intersections of $\sigma$ with leaves of foliation as follows. 

    Let $H$ be parametrized linearly by $[0, 1]$. Let $A$ be the image of the set $\sigma \cap W$ via the orthogonal projection onto the subspace which is orthogonal to the affine span of $\sigma \cap \pi_2^{-1}(v_i)$ (we can translate both of those sets to the leaf containing 0). Then the volume of the leaf of $\sigma$ over $t \in [0, 1]$ is given by $(1-t) \ \textup{Vol}(A) \cdot\ t \ \textup{Vol}(\sigma \cap \pi_2^{-1}(v_i))$. Since $\textup{Vol}(\sigma \cap \pi_2^{-1}(v_i))$ is a scalar multiple of $r_i^{k_i-1}$, the claim follows by the Fubini theorem.
\end{proof}

\begin{corollary}
    If $r_0, \ldots, r_m$ are algebraically independent, then the set of volumes of different simplices of a triangulation of $\Delta^n \times \Delta^m$ is linearly independent.
\end{corollary}

We cite again De Loera, Rambau and Santos \cite{triangulations} for the well-known fact that every triangulation of $\Delta^n \times \Delta^m$ (on the vertices of $\Delta^n \times \Delta^m$) contains exactly one $(k_0, \ldots, k_m)$-simplex for every choice of $(k_0, \ldots, k_m)$. In fact, all we need to know is that there exists at least one triangulation with that property - then from the reasoning below follows that this is true for every triangulation.
\begin{proof}[Proof of Theorem \ref{prod-sympleks}]
    Choose $r_0, \ldots, r_m$ in such a way that they form the transcendence basis of $\mathbb{K}(r_0, \ldots, r_m)$ over $\mathbb{K} = \mathbb{Q}^{alg} \cap \mathbb{R}$. Applying the standard homotopy and considering the volume sum, we get the claim.
\end{proof}

\begin{theorem}[Multilabeled Sperner's Lemma]
    Let $T$ be a triangulation of $\Delta^{n-1} = \text{conv}(\{p_1, \ldots, p_n\})$ and let $\lambda_1, \ldots, \lambda_m$ be Sperner labelings on $T$. Then
    \begin{enumerate}
        \item for any choice of positive integers $k_1, \ldots, k_m$ such that $k_1 + \ldots + k_m = m+n-1$, there exists a simplex $\sigma \in T$ on which, for each $i$, the labeling $\lambda_i$ uses at least $k_i$ distinct labels;
        \item for any choice of positive integers $l_1, \ldots, l_n$ such that $l_1 + \ldots + l_n = m+n-1$, there exists a simplex $\tau \in T$ on which, for each $j$, the label $j$ is used in at least $l_j$ labelings.
    \end{enumerate}
\end{theorem}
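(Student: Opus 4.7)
The plan is to derive the multilabeled Sperner lemma as a corollary of the preceding product-of-simplices theorem, by encoding the $m$ labelings $\lambda_1,\ldots,\lambda_m$ into a single generalized Sperner labeling on a compatible triangulation of $\Delta^{n-1}\times\Delta^{m-1}$.

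First I would build a triangulation $T'$ of $\Delta^{n-1}\times\Delta^{m-1}$ whose vertex set is the Cartesian product of the vertex set of $T$ with $\{q_1,\ldots,q_m\}$, where $q_1,\ldots,q_m$ are the extreme points of $\Delta^{m-1}$. To do this, for each maximal $\sigma\in T$ I triangulate $\sigma\times\Delta^{m-1}$ on its vertex set using the standard staircase triangulation recalled from \cite{triangulations}; these pieces glue consistently along shared boundaries. Label each $T'$-vertex by $\Lambda(v,q_j):=(\lambda_j(v),j)\in\{1,\ldots,n\}\times\{1,\ldots,m\}$, so the $nm$ extreme points $(p_i,q_j)$ of the product receive the pairwise distinct labels $(i,j)$. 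The Sperner condition on each $\lambda_j$ transfers to the generalized Sperner condition on $\Lambda$: the minimal face of $\Delta^{n-1}\times\Delta^{m-1}$ containing $(v,q_j)$ has the form $F\times\{q_j\}$ with $F$ the minimal face of $\Delta^{n-1}$ containing $v$, so $\lambda_j(v)\in\{i\mid p_i\in F\}$ forces $\Lambda(v,q_j)$ to coincide with the label of some extreme point of $F\times\{q_j\}$.

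Next I invoke the previous theorem. For Part 1, given $(k_1,\ldots,k_m)$ with $k_1+\cdots+k_m=n+m-1$, it produces a $T'$-simplex $\tau$ whose labels match those of some $(k_1,\ldots,k_m)$-simplex on the vertices of the product. Unwrapping the definition, $\tau$ contains, for each $j$, exactly $k_j$ vertices of the form $(v,q_j)$ and the values $\lambda_j(v)$ on them hit $k_j$ distinct elements of $\{1,\ldots,n\}$. By construction each $T'$-simplex is contained in a single $\sigma\times\Delta^{m-1}$, so all these vertices $v$ belong to a common $\sigma\in T$, which is the desired simplex. Part 2 follows by the symmetric argument applied to the $(l_1,\ldots,l_n)$-simplex version of the product theorem: the produced $\tau$ contains, for each $i$, exactly $l_i$ vertices $(v,q_j)$ with $\lambda_j(v)=i$ and pairwise distinct indices $j$, meaning that on the containing $\sigma\in T$ the label $i$ appears under at least $l_i$ different labelings.

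The only nontrivial step is certifying that $T'$ is a legitimate triangulation carrying a generalized Sperner labeling in the sense required by the product theorem; once this is established, the rest is index bookkeeping between the $(k_j)$- and $(l_i)$-conventions, and the crucial geometric fact that simplices of $T'$ do not cross the decomposition $\bigcup_{\sigma\in T}\sigma\times\Delta^{m-1}$ is automatic from the construction.
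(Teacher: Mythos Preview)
Your proposal is correct and follows essentially the same route as the paper's own proof: both encode the $m$ Sperner labelings as a single labeling on a triangulation of $\Delta^{n-1}\times\Delta^{m-1}$ refining the cell decomposition $\{\sigma\times\Delta^{m-1}:\sigma\in T\}$, check the generalized Sperner condition, invoke the product-of-simplices theorem, and read off $\sigma$ from the level-polytope containing the resulting simplex. You are simply more explicit than the paper about the staircase construction (where one should fix a global linear order on the $T$-vertices so that the pieces glue) and about the verification that $\Lambda$ is a generalized Sperner labeling.
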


\begin{proof}
    We are going to treat the labelings as vertices of a simplex $\Delta^{m-1}$. Hence, we have two simplices $\Delta^{n-1}$ and $\Delta^{m-1}$, where the first one is endowed with a triangulation $T$. Take the product of these two simplices. The fibers of $\pi_2: \Delta^{n-1} \times \Delta^{m-1} \to \Delta^{m-1}$ are the copies of $\Delta^{n-1}$, so it makes sense to label $\pi_2^{-1}(\lambda_i)$ with $\lambda_i$. Thus, we have a coloring of the product (subdivided into smaller "level-polytopes" arising from the triangulation $T$). Triangulate this product on its vertices, extending the triangulation $T$ on each copy. We arrange a homotopy as in the proof of Sperner's lemma, with each vertex moving in its $\pi_2$-fiber. The first part of the theorem asserts that there exists a level-polytope which, in its image by the homotopy, contains some $(k_1, \ldots, k_m)$-simplex. The second part asserts that there exists a level-polytope which, in its image by the homotopy, contains some $(l_1, \ldots, l_n)$-simplex. Both claims follow directly from the previous theorem.
\end{proof}

One might be tempted to try to generalize this reasoning to the products of (convex) polytopes or at least the products with a simplex in order to obtain a theorem unifying the Atanassov conjecture with the multilabeled Sperner's lemma. However, this most obvious generalization does not hold, even for a cube (i.e., a product of a square with a simplex).
\begin{figure}[h]
    \centering
    \includegraphics[scale=2]{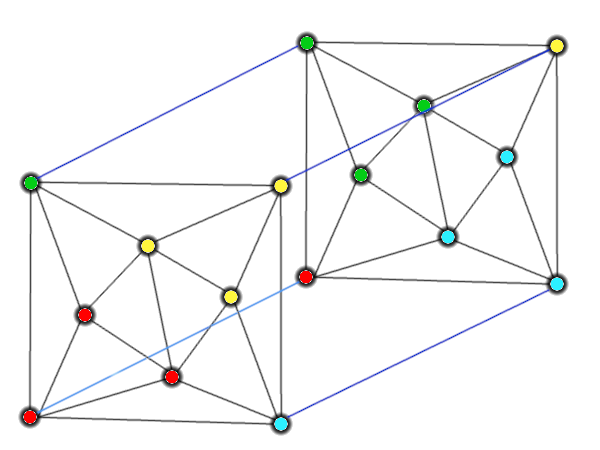}
    \caption{A coloring of vertices assigned to the triangulation of a cube that gives exactly one simplex of the type $(2,2)$.}
\end{figure}

In the picture below, for the sake of clarity, we omitted some of the edges connecting corresponding vertices of two copies of the square. As one can check, this coloring gives rise to only one simplex of the type $(2,2)$ (no matter which subtriangulation we choose, and despite the fact that the colors assigned to that simplex are not uniquely determined), while one would expect two simplices of that type. Again, the sum of the oriented volumes of the simplices of a fixed type does not depend on the choice of the triangulations, but this quantity may not be equally distributed between them, as the example above indicates.

We conclude the paper with the proof of a multilabeled generalization of Ky Fan's lemma stated in the overview.

\begin{proof}[Proof of Theorem \ref{mult-kyfan}]
    The proof is by induction and is similar to the previous proof of the ordinary Ky Fan's lemma, with the aid of the heuristic topological equation:
    $$
        \partial (\mathbb{B}^n \times \Delta^{m-1}) = (\partial \mathbb{B}^n \times \Delta^{m-1}) \cup (\mathbb{B}^n \times \partial \Delta^{m-1}) $$
        $$= (\mathbb{S}^{n-1} \times \Delta^{m-1}) \cup (\mathbb{B}^n \times \partial \Delta^{m-1})
        $$
        $$
        \cong (2 \cdot \mathbb{B}^{n-1} \times \Delta^{m-1}) \cup (\mathbb{B}^n \times m \cdot \Delta^{m-2}).
    $$

    The first step is to prove the theorem in the case $n=1$ (and arbitrary $m$) - in this situation, we are unable to divide the boundary of a ball into two balls of the lower dimension. The case $m=1$ is covered by the ordinary Ky Fan's lemma; therefore, we can proceed by induction on $m$. As before, fix a geometric realization of $P$ with algebraically independent coordinates and take its product with an ordinary realization of $\Delta^m$. Apply the homotopy extended from the fiberwise movement of the vertices (i.e., each vertex in each fiber moves as in the proof of the ordinary Ky Fan's lemma). We inspect parts of $\partial (\mathbb{B}^1 \times \Delta^{m-1})$ isomorphic with $\mathbb{B}^1 \times \Delta^{m-2}$ (preimages of faces of $\Delta^{m-1}$ via $\pi$). By the inductive assumption, on each of them, there is an odd number of alternating simplices of any type: for any sequence $(a_1, \ldots, a_m)$ with $a_i=2, a_j=0$ for some $i, j$ and $a_k=0$ for $k \neq i, j$, there is an odd number of alternating $(a_1, \ldots, a_m)$-simplices in $\partial (\mathbb{B}^1 \times \Delta^{m-1})$ (every simplex contained in $\partial \mathbb{B}^1 \times \Delta^{m-1}$ is of the type $(1, 1, 1, \ldots, 1)$). Every alternating $(a_1, \ldots, a_m)$-simplex is a face of an $(a_1, \ldots, a_j+1, \ldots, a_m)$-simplex (which is trivially seen to also be alternating) and cannot be a face of any nondegenerate simplex of a different type. Furthermore, each $(a_1, \ldots, a_j+1, \ldots, a_m)$-simplex has exactly one such face. Hence, comparing the expressions for the volume of $\mathbb{B}^1 \times \Delta^{m-1}$ obtained by the application of the generalized shoelace formula and by the volume sum at the end of the constructed homotopy, we get the claim.

    We proceed to prove the theorem for any choice of $n$ and $m$, assuming it is true for $\mathbb{B}^{n-1} \times \Delta^{m-1}$ and $\mathbb{B}^n \times \Delta^{m-2}$. As previously, we see that there is an odd number of alternating $(a_1, \ldots, a_m)$-simplices in $\partial (\mathbb{B}^n \times \Delta^{m-1})$. Furthermore, a glimpse at $\partial \mathbb{B}^n \times \Delta^{m-1} \cong (2 \cdot \mathbb{B}^{n-1} \times \Delta^{m-1})$ shows that there is an even (but not divisible by four) number of alternating $(b_1, \ldots, b_m)$-simplices for every $(b_1, \ldots, b_m)$, a sequence of strictly positive natural numbers, just as in the ordinary Ky Fan's lemma. To finish the proof, we express the volume in terms of the faces in the boundary - for the $(a_1, \ldots, a_m)$-simplices we reason as in the $n=1$ case, while for the rest just like in the ordinary Ky Fan's lemma.
\end{proof}
\section{Declarations}
No funding was received to assist with the preparation of this manuscript.
\newpage

\bibliographystyle{siamplain}

\end{document}